\def \[{\begin{equation}}
\def \]{\end{equation}}
\newtheorem{thm}{Theorem}[section]
\newtheorem{lem}{Lemma}[section]
\newtheorem{defn}{Definition}[section]
\newtheorem{rem}{Remark}[section]
\newtheorem{exam}{Example}[section]
\numberwithin{equation}{section}
\def\fix{\pmb{Fix}}
\title{\bf A fixed-time inverse-free dynamical system for solving the system of absolute value equations}
\author[a]{Xuehua Li\thanks{Email address: 3222714384@qq.com.}}
\author[b]{Dongmei Yu\thanks{Supported partially by the Ministry of Education in China of Humanities and Social Science Project (Grand No. 21YJCZH204), the Natural Science Foundation of Liaoning Province (Grand Nos. 2020-MS-301, LJ2020ZD002) and the Youth Talent Entrustment Project of Liaoning Provincial Federation Social Science Circles (Grand No. 2022lslwtkt-069). Email: {yudongmei1113@163.com}.}}
\author[c]{Yinong Yang\thanks{Supported partially by the National Natural Science Foundation of China (Grant
No. 12101281). Email address: yynmath@163.com.}}
\author[d]{Deren Han\thanks{Supported partially by the National Natural Science Foundation of China (Grant Nos. 12131004 and 11625105). Email address: {handr@buaa.edu.cn}.}}
\author[a]{Cairong Chen\thanks{Corresponding author. Supported partially by the National Natural Science Foundation of China (Grant No. 11901024) and  the Natural Science Foundation of Fujian Province (Grand No. 2021J01661). Email address: cairongchen@fjnu.edu.cn.}}
\affil[a]{School of Mathematics and Statistics, FJKLMAA and Center for Applied Mathematics of Fujian Province, Fujian Normal University, Fuzhou, 350007, P.R. China}
\affil[b]{Institute for Optimization and Decision Analytics, Liaoning Technical University, Fuxin, 123000, P.R. China}
\affil[c]{School of Mathematics, Liaoning University, Shenyang, 110036, P.R. China}
\affil[d]{LMIB of the Ministry of Education, School of Mathematical Sciences, Beihang University, Beijing 100191, P.R. China}
\begin{document}
\date{}
\maketitle

\begin{quote}
{\bf Abstract:} In this paper, an inverse-free dynamical system with fixed-time convergence is presented to solve the system of absolute value equations (AVEs). Under a mild condition, it is proved that the solution of the proposed dynamical system converges to the solution of the AVEs. Moreover, in contrast to the existing inverse-free dynamical system \cite{chen2021}, a conservative settling-time of the proposed method is given. Numerical simulations illustrate the effectiveness of the new method.

{\small

\noindent{\bf Keywords:} Absolute value equation; Fixed-time convergence;  Inverse-free;  Dynamical system; Numerical simulation.
}
\end{quote}

\section{Introduction}\label{sec:Introduction}
To solve the system of absolute value equations (AVEs) is to find an $x \in \mathbb{R}^n$ such that
\begin{equation}\label{eq:ave}
  Ax - |x| - b = 0,
\end{equation}
where $A \in \mathbb{R}^{n\times n}$, $b\in \mathbb{R}^{n}$ and $|x|$ represents the
componentwise absolute value of the unknown vector $x$. The AVEs \eqref{eq:ave} is a special case of the generalized absolute value equations~(GAVEs)
\begin{equation}\label{eq:gave}
	Cx - D|x| - c = 0,
\end{equation}
where $C,\, D\in \mathbb{R}^{m\times n}$, $x\in \mathbb{R}^{n}$ and $c\in \mathbb{R}^{m}$. The  GAVEs~\eqref{eq:gave} is originally introduced by Rohn in \cite{rohn2004} and further investigated in \cite{mang2007,prok2009,hlad2018} and the references therein. Over the past two decades, the AVEs~\eqref{eq:ave} has been widely studied in the optimization community because of its relevance to many mathematical programming problems, such as the linear complementarity problem~(LCP) \cite{prok2009,mang2007,mame2006,hu2010}, the horizontal LCP~(HLCP) \cite{mezzadri2020}, the generalized LCP (GLCP)~\cite{mame2006} and others, see e.g. \cite{prok2009,mame2006,mang2009} and the references therein. In addition, the AVEs~\eqref{eq:ave} is closely related to the system of linear interval equations \cite{rohn1989}.

In general, it has been shown in \cite{mang2007} that solving the GAVEs~\eqref{eq:gave} is NP-hard.  Moreover, if the GAVEs~\eqref{eq:gave} is solvable, it follows from \cite{prok2009} that checking whether the GAVEs~\eqref{eq:gave} has a unique solution or multiple solutions is NP-complete. Throughout this paper, we assume that $A$ is invertible and $\|A^{-1}\| <1$, and thus the AVEs~\eqref{eq:ave} has a unique solution for any $b\in \mathbb{R}^n$ \cite{mame2006}. For more discussions about the unique solvability of the AVEs~\eqref{eq:ave}, see e.g. \cite{wuli2018,mezzadri2020,rohn2014,wugu2016,hladik2022} and the references therein.

In this paper, we focus on further studying the continuous solution schemes for solving the AVEs~\eqref{eq:ave}. We briefly introduce some existing works in the following. To this end, we recall two reformulations of the AVEs~\eqref{eq:ave}.

The AVEs~\eqref{eq:ave} is equivalent with the following GLCP \cite{mame2006}:
\begin{equation}\label{eq:glcp}
    Q(x)\doteq Ax+x-b\ge 0,\quad  F(x)\doteq Ax-x-b\ge 0 \quad \hbox{and}\quad \left\langle  Q(x), F(x)\right\rangle=0.
\end{equation}
Furthermore, if $1$ is not an eigenvalue of $A$, then the AVEs~\eqref{eq:ave} can be reformulated as the following LCP \cite{mame2006}:
\[\label{lcps}
u\ge 0,\quad (A+I)(A-I)^{-1}u+q\ge0 \quad \hbox{and}\quad \left\langle u,(A+I)(A-I)^{-1}u+q\right\rangle=0
\]
with
\begin{equation}\label{eq:ux}
q=\left[(A+I)(A-I)^{-1}-I\right]b,\quad u=(A-I)x-b.
\end{equation}
It follows from \eqref{lcps} and \eqref{eq:ux} that if $u^*$ is a solution of the LCP~\eqref{lcps}, then $x^* = (A-I)^{-1}(u^*+b)$ is a solution of the AVEs~\eqref{eq:ave}.

By utilizing the reformulation \eqref{lcps} with \eqref{eq:ux} of the AVEs~\eqref{eq:ave}, some dynamical systems are constructed to solve the AVEs~\eqref{eq:ave}. For instance, the following dynamical model
\begin{align*}
&\text{state~equation:}\quad \frac{du}{dt} =P_{\Omega} [u - \lambda g(u,\beta)] - u,\\\label{eq:maee2}
&\text{output~equation:} \quad x = (A-I)^{-1}(u+b)
\end{align*}
is used by Mansoori, Eshaghnezhad and Effati \cite{maee2017} to solve the AVEs~\eqref{eq:ave}, where $0<\lambda \le 1$, $0<\beta< \frac{1}{5L}$, and
\begin{align*}
&g(u,\beta) = e(u,\beta) - \beta Me(u,\beta),\\
&e(u,\beta) = u - P_{\Omega} [u - \beta(Mu + q)],\\
&L = \|M\|,\; M = (A+I)(A-I)^{-1},\\
&\Omega=\{x\in\mathbb{R}^n|x\ge 0\}.
\end{align*}
Huang and Cui \cite{huang2017} use the following dynamical system
\begin{align*}
&\text{state~equation:}\quad \frac{du}{dt} = -\gamma e(u,1),\\
&\text{output~equation:} \quad x = (A-I)^{-1}(u+b)
\end{align*}
to solve the AVEs~\eqref{eq:ave} and Mansoori and Erfanian \cite{maer2018} suggest the following dynamical system
\begin{align*}
&\text{state~equation:}\quad \frac{du}{dt} = -\gamma(I+M^\top) e(u,1),\\
&\text{output~equation:} \quad x = (A-I)^{-1}(u+b)
\end{align*}
to solve the AVEs~\eqref{eq:ave}, where $\gamma>0$ is the convergence rate. Recently, Ju, Li, Han and He \cite{ju2022} develop a fixed-time dynamical system for solving the AVEs~\eqref{eq:ave}. Based on the relation with HLCP, the following dynamical system is proposed in \cite{gao2014} to solve AVEs~\eqref{eq:ave}:
\begin{align*}
&\text{state~equation:}\quad \frac{dz}{dt} = \frac{\rho}{2}(|x|-z),\\
&\text{output~equation:} \quad x = A^{-1}(z+b),
\end{align*}
where $\rho >0$ is a scaling constant. Obviously, all of the above mentioned dynamical systems involve the inversion of the matrix $A-I$ or $A$. In order to avoid the inversion, based on \eqref{eq:glcp}, Chen, Yang, Yu and Han \cite{chen2021} develop the following inverse-free dynamical system
\begin{equation}\label{eq:dymos}
\frac{dx}{dt} = \gamma A^\top(b + |x| - Ax)\doteq -\gamma A^\top r(x)\doteq g(\gamma, x),
\end{equation}
where $\gamma>0$ is the convergence rate parameter. The dynamical system \eqref{eq:dymos} can trace back to \cite{xia2004,liu2010,hu2007,gao2001}. Subsequently, an inertial version of \eqref{eq:dymos} is developed in \cite{yu2022}. Other inverse-free dynamical system appears in \cite{saheya2019,wang2015,yong2020}, in which the smoothing technique is used.

It is known that the finite-time convergence proposed in \cite{bhat2000} is of practical interests than the classic asymptotical stability or exponential stability over infinite time \cite{ju2022}. To overcome the limitation that the settling-time of the finite-time model is initial condition dependent, the concept of the so-called fixed-time convergence \cite{polyakov2011} is developed. Inspired by the works of \cite{chen2021,ju2022}, we will develop a fixed-time inverse-free dynamical system for solving the AVEs~\eqref{eq:ave}. The main features of our method can be summarized as follows.

\begin{itemize}
\item[(a)] Comparing with the methods proposed in \cite{maee2017,huang2017,maer2018,gao2014}, the proposed method is inverse-free and possesses a conservative settling-time.

\item[(b)] Comparing with the method proposed in \cite{ju2022}, the proposed method is inverse-free.

\item[(c)] Comparing with the methods proposed in \cite{chen2021,yu2022}, the proposed method has a conservative settling-time.

\item[(d)] Comparing with the methods proposed in \cite{saheya2019,wang2015,yong2020}, the proposed method has a conservative settling-time and does not need any smoothing function.

\end{itemize}

The rest of this paper is organized as follows. In section \ref{sec:pre} we state a few basic results on the AVEs~\eqref{eq:ave} and the autonomous system, relevant to our later developments. The fixed-time inverse-free dynamical system to solve the AVEs~\eqref{eq:ave} is developed in section \ref{sec:main} and its convergence analysis is also given there.
Numerical simulations are given in section \ref{sec:numerical}. Conclusions are made in section \ref{sec:conclusion}.

\textbf{Notation.} We use $\mathbb{R}^{n\times n}$ to denote the set of all $n \times n$ real matrices and $\mathbb{R}^{n}= \mathbb{R}^{n\times 1}$. We use $\mathbb{R}_+$ to denote the nonnegative reals. $I$ is the identity matrix with suitable dimension. $| \cdot |$ denotes absolute value for real scalar. The transposition of a matrix or vector is denoted by $\cdot ^\top$. The inner product of two vectors in $\mathbb{R}^n$ is defined as $\langle x, y\rangle\doteq x^\top y= \sum\limits_{i=1}^n x_i y_i$ and $\| x \|\doteq\sqrt{\langle x, x\rangle} $ denotes the $2$-norm of vector $x\in \mathbb{R}^{n}$. $\|A\|$ denotes the spectral norm of $A$ and is defined by the formula $\| A \|\doteq \max \left\{ \| A x \| : x \in \mathbb{R}^{n}, \|x\|=1 \right\}$. The smallest singular value and the smallest eigenvalue of~$A$ are denoted by $\sigma_{\min}(A)$ and $\lambda_{\min}(A)$, respectively. $\textbf{tridiag}(a, b, c)$ denotes a matrix that has $a, b, c$ as the subdiagonal, main diagonal and superdiagonal entries in the matrix, respectively. The projection mapping from $\mathbb{R}^n$ onto $\Omega$, denoted by $P_{\Omega}$, is defined as $P_{\Omega}[x]=\arg\min\{\|x-y\|:y\in \Omega\}$.

\section{Preliminaries}\label{sec:pre}
In this section, we collect a few important results on the autonomous system and the AVEs~\eqref{eq:ave}, which lay the foundation of our later arguments.

Before talking something about the autonomous system, we give the definition of Lipschitz continuity.
\begin{defn}
The function $F:\mathbb{R}^n\rightarrow \mathbb{R}^n$ is said to be Lipschitz continuous with Lipschitz constant $L>0$ if
$$
\|F(x)-F(y)\|\le L\|x-y\|,\quad \forall x,y \in \mathbb{R}^n.
$$
\end{defn}

Consider the autonomous system
\begin{equation}\label{eq:auto-dysm}
\frac{dx}{dt} = f(x(t)),\quad x(0) = x_0\in \mathbb{R}^n,
\end{equation}
where $f$ is a function from $\mathbb{R}^n$ to $\mathbb{R}^n$. Throughout this paper, denote $x(t;x_0)$ the solution of~\eqref{eq:auto-dysm} determined by the initial value condition $x(0) = x_0$.

\begin{lem}(\cite{khalil1996})\label{lem:solution}
Assume that $f:\mathbb{R}^n\rightarrow\mathbb{R}^n$ is a continuous function, then for arbitrary $x(0)=x_0\in \mathbb{R}^n$, there exists a local solution $x(t;x_0),\,t\in[0,\tau]$ for some $\tau> 0$. Furthermore, if $f$ is locally Lipschitz continuous at $x_0$, then the solution is unique; and if $f$ is Lipschitz continuous in $\mathbb{R}^n$, then $\tau$ can be extended to $+\infty$.
\end{lem}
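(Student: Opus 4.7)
The plan is to prove the three assertions in the order they are stated, using the standard toolkit of ODE theory. First I would convert the Cauchy problem \eqref{eq:auto-dysm} into the equivalent integral equation $x(t)=x_{0}+\int_{0}^{t}f(x(s))\,ds$, so that every claim about differentiable solutions can be recast as a claim about continuous fixed points of the Volterra operator $T[x](t)=x_{0}+\int_{0}^{t}f(x(s))\,ds$.

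For the local existence part I would invoke Peano's theorem. Concretely, pick a closed ball $\overline{B}(x_{0},r)\subset\mathbb{R}^{n}$; since $f$ is continuous it is bounded on this compact set, say $\|f\|\le M$, and on the interval $[0,\tau]$ with $\tau=r/M$ the Euler polygonal (or Tonelli) approximations $x_{k}(t)$ stay in $\overline{B}(x_{0},r)$, are uniformly bounded and equicontinuous. Arzel\`a--Ascoli then yields a subsequence converging uniformly to some $x\in C([0,\tau];\mathbb{R}^{n})$, and continuity of $f$ lets one pass to the limit inside the integral to conclude that $x$ satisfies the integral equation and hence solves \eqref{eq:auto-dysm} on $[0,\tau]$. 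This is the step I expect to be the main obstacle, because continuity alone gives no contraction, so one cannot use Banach's fixed point theorem directly and must argue via compactness.

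For uniqueness under local Lipschitz continuity at $x_{0}$, I would suppose $x_{1},x_{2}$ are two solutions on a common interval $[0,\tau']$ with $\tau'$ small enough that both remain in a neighbourhood of $x_{0}$ on which $f$ is Lipschitz with constant $L$. Subtracting the integral equations and taking norms gives
\[
\|x_{1}(t)-x_{2}(t)\|\le L\int_{0}^{t}\|x_{1}(s)-x_{2}(s)\|\,ds,
\]
and Gronwall's inequality forces $x_{1}\equiv x_{2}$ on $[0,\tau']$; a standard continuation (define the set where they agree, show it is closed and open in the common domain) extends the equality to the whole common interval of existence.

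For the global existence part under a global Lipschitz condition, the cleanest route is to fix an arbitrary $T>0$ and show $T$ is itself a contraction on $C([0,T];\mathbb{R}^{n})$ equipped with the weighted Bielecki norm $\|x\|_{*}=\sup_{t\in[0,T]}e^{-2Lt}\|x(t)\|$; the Lipschitz bound gives $\|T[x]-T[y]\|_{*}\le\tfrac{1}{2}\|x-y\|_{*}$, and Banach's fixed point theorem produces a unique solution on $[0,T]$. Since $T$ is arbitrary, these solutions patch together into one defined on $[0,+\infty)$. Alternatively, one can argue directly that the maximal solution cannot blow up in finite time: from $\|f(x)\|\le\|f(0)\|+L\|x\|$ and Gronwall one obtains $\|x(t)\|\le(\|x_{0}\|+t\|f(0)\|)e^{Lt}$, ruling out blow-up and allowing the local solution to be continued to all of $[0,+\infty)$.
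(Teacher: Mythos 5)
The paper gives no proof of this lemma at all: it is quoted verbatim from Khalil's \emph{Nonlinear Systems} \cite{khalil1996} and used as a black box, so there is nothing in the paper to compare your argument against line by line. Your proposal is the standard textbook proof and is essentially correct: Peano existence via boundedness of $f$ on a compact ball, equicontinuity of the Euler/Tonelli polygons, and Arzel\`a--Ascoli (correctly identifying that continuity alone forces a compactness argument rather than a contraction); uniqueness via the integral equation and Gronwall's inequality on an interval where both solutions stay inside the Lipschitz neighbourhood of $x_0$; and global extension under a global Lipschitz condition either by the Bielecki-norm contraction on $C([0,T];\mathbb{R}^n)$ for arbitrary $T$ or by the a priori bound $\|x(t)\|\le(\|x_0\|+t\|f(0)\|)e^{Lt}$ ruling out finite-time blow-up. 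One small caveat: your ``open and closed'' continuation step for uniqueness needs $f$ to be locally Lipschitz at every point of the trajectory, not merely at $x_0$; under the hypothesis as stated, uniqueness is only guaranteed on a short initial interval, which is in fact all that Khalil's theorem (and this lemma, properly read) asserts, so you should drop that extension rather than rely on it. There is also a cosmetic clash in using $T$ both for the Volterra operator and for the time horizon in the last paragraph.
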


\begin{defn}(\cite{khalil1996}) Let $x^*\in \mathbb{R}^n$, then it is called an equilibrium point of the dynamical system \eqref{eq:auto-dysm} if $f(x^*) = 0$.
\end{defn}

\begin{lem}(\cite{khalil1996})\label{thm:als}
Let $x^*$ be an equilibrium point for \eqref{eq:auto-dysm}. Let $V:\mathbb{R}^n \rightarrow \mathbb{R}$ be a continuously differentiable function such that
$$
V(x^*) = 0 \quad \text{and} \quad V(x)> 0,\;\forall x\neq  x^*,
$$
$$\frac{dV(x)}{dt}< 0,\;\forall x\neq  x^*,
$$
$$
\|x-x^*\|\rightarrow \infty \Rightarrow V(x)\rightarrow\infty,
$$
then $x^*$ is globally asymptotically stable.
\end{lem}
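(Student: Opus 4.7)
The plan is to decompose ``globally asymptotically stable'' into Lyapunov stability together with global attractivity of $x^\ast$, and verify each using the monotonicity of $V$ along trajectories. Throughout, the key analytic fact is that for every initial condition $x_0$, the map $t\mapsto V(x(t;x_0))$ is nonincreasing, and strictly decreasing while the trajectory is away from $x^\ast$, because $\frac{d}{dt}V(x(t;x_0)) = \nabla V(x(t;x_0))^\top f(x(t;x_0)) < 0$ whenever $x(t;x_0)\neq x^\ast$. A preliminary observation, needed before either property is meaningful, is that the radial unboundedness assumption together with the bound $V(x(t;x_0))\le V(x_0)$ traps the trajectory in a bounded set, so by a standard extension argument the solution exists on all of $[0,\infty)$.

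For Lyapunov stability, fix $\varepsilon>0$. The sphere $S_\varepsilon = \{x:\|x-x^\ast\|=\varepsilon\}$ is compact and excludes $x^\ast$, so $\alpha \doteq \min_{x\in S_\varepsilon} V(x) > 0$. By continuity of $V$ at $x^\ast$, there exists $\delta\in(0,\varepsilon)$ with $V(x_0)<\alpha$ whenever $\|x_0-x^\ast\|<\delta$. Since $V(x(t;x_0))$ is nonincreasing, $V(x(t;x_0))<\alpha$ for all $t\ge 0$, and consequently the trajectory can never meet $S_\varepsilon$. Combined with $\|x_0-x^\ast\|<\varepsilon$ and continuity of $t\mapsto x(t;x_0)$, this forces $\|x(t;x_0)-x^\ast\|<\varepsilon$ for all $t\ge 0$.

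For global attractivity, fix an arbitrary $x_0\in\mathbb{R}^n$. By monotonicity and nonnegativity, $V(x(t;x_0))$ decreases to some limit $c\ge 0$. Assume for contradiction that $c>0$. The sandwich set $K=\{x:c\le V(x)\le V(x_0)\}$ is closed by continuity of $V$, bounded by radial unboundedness, hence compact; moreover $x^\ast\notin K$ since $V(x^\ast)=0<c$. On $K$ the continuous function $\dot V$ attains a strictly negative maximum $-\gamma<0$, and the entire trajectory lies in $K$, so integrating yields $V(x(t;x_0))\le V(x_0)-\gamma t$, eventually negative --- contradicting $V\ge 0$. Hence $c=0$, and radial unboundedness combined with continuity of $V$ (applied contrapositively) then forces $x(t;x_0)\to x^\ast$.

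The main obstacle is the global attractivity step, where radial unboundedness must be used twice: first to keep the trajectory bounded and make the sandwich $K$ compact, so that $\dot V$ can be bounded away from zero there; and again to upgrade $V(x(t;x_0))\to 0$ to $\|x(t;x_0)-x^\ast\|\to 0$, ruling out the pathological possibility that the trajectory escapes to infinity along directions where $V$ decays. Without this hypothesis one only recovers local asymptotic stability, so the delicate point is to exploit the hypothesis in both the ``compactness'' and the ``convergence'' stages of the argument.
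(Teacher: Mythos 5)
The paper gives no proof of this lemma at all---it is quoted directly from Khalil's textbook---and your proposal is exactly the standard argument behind that citation, carried out correctly: Lyapunov stability via the positive minimum of $V$ on a sphere around $x^*$, global attractivity via the compact sandwich set $K$ on which $\dot V$ is bounded away from zero, with radial unboundedness used both to keep trajectories bounded (hence globally defined and $K$ compact) and to convert $V(x(t;x_0))\to 0$ into $x(t;x_0)\to x^*$. The only implicit ingredient worth flagging is that the compactness step requires $x\mapsto \nabla V(x)^\top f(x)$ to be continuous, which holds because $f$ is continuous in the surrounding setting of the autonomous system.
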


\begin{defn}(\cite{polyakov2011})
The equilibrium point $x^*\in \mathbb{R}^n$ of the dynamical system \eqref{eq:auto-dysm} is said to be globally finite-time stable if it is globally asymptotically stable and any solution $x(t;x_0)$ of \eqref{eq:auto-dysm} reaches the equilibria at some finite time moment, i.e., $x(t;x_0) = x^*, \forall t\ge T(x_0)$, where $T:\mathbb{R}^n\rightarrow \mathbb{R}_+$ is the so-called settling-time function.
\end{defn}

\begin{defn}(\cite{polyakov2011})
The equilibrium point $x^*\in \mathbb{R}^n$ of the dynamical system \eqref{eq:auto-dysm} is said to be globally fixed-time stable if it is globally finite-time stable and the settling-time function $T(x_0)$ is bounded by some positive number $T_{\max}>0$, i.e., $T(x_0) \le T_{\max}$ for any $x_0\in \mathbb{R}^n$.
\end{defn}

\begin{lem}(\cite{polyakov2011})\label{lem:ft}
Let $x^*\in \mathbb{R}^n$ be the equilibrium point of the dynamical system \eqref{eq:auto-dysm}. If there exists a continuous radially unbounded function $V:\mathbb{R}^n\rightarrow \mathbb{R}_+$ such that
\begin{itemize}
  \item [(a)] $V(x) = 0 \quad \Leftrightarrow x = x^*;$

  \item [(b)] any solution $x(t;x_0)$ of the dynamical system \eqref{eq:auto-dysm} satisfies the following inequality
$$
\frac{dVx(t;x_0)}{dt} \le -\alpha V(x(t;x_0))^{k_1} - \beta V(x(t;x_0))^{k_2}
$$
for some $\alpha>0,\beta>0$, $0<k_1<1,\,k_2>1$.
\end{itemize}
Then the equilibrium point $x^*$ of \eqref{eq:auto-dysm} is globally fixed-time stable with
settling-time
$$
T(x_0) \le T_{\max} = \frac{1}{\alpha(1-k_1)} + \frac{1}{\beta(k_2 - 1)},\forall x_0\in \mathbb{R}^n.
$$
\end{lem}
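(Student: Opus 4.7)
The plan is to reduce the analysis to a scalar differential inequality for $v(t):=V(x(t;x_0))$. By hypothesis (b), $v$ satisfies
\[
\dot v(t)\le -\alpha\,v(t)^{k_1}-\beta\,v(t)^{k_2},
\]
and in particular $\dot v<0$ whenever $v>0$. Combined with the continuity, positive definiteness, and radial unboundedness of $V$ from (a), this already yields $V(x(t;x_0))\to 0$, and hence $x(t;x_0)\to x^*$, via Lemma \ref{thm:als}. Thus it remains to upper bound the hitting time of the level set $\{v=0\}$ in a manner independent of the initial point $x_0$.

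The key observation is that the two terms on the right-hand side take turns being dominant across the threshold $v=1$. As long as $v(t)\ge 1$, we have $v^{k_2}\ge v^{k_1}$, so we may discard the $\alpha$-term and obtain $\dot v\le -\beta v^{k_2}$. Separating variables and integrating from $0$ to the first hitting time $T_1$ of $\{v=1\}$ yields
\[
\frac{v(0)^{1-k_2}-1}{k_2-1}\ge \beta\, T_1,
\]
whence $T_1\le \frac{v(0)^{1-k_2}}{\beta(k_2-1)}\le \frac{1}{\beta(k_2-1)}$; this bound is uniform in $x_0$ precisely because $1-k_2<0$, so large initial values of $v$ make the right-hand side \emph{smaller}, not larger. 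Once $v$ enters $[0,1]$ the roles swap: now $v^{k_1}\ge v^{k_2}$, so $\dot v\le -\alpha v^{k_1}$, and a second separation-of-variables argument bounds the remaining time by $\frac{1}{\alpha(1-k_1)}$. Adding the two pieces delivers exactly the claimed $T_{\max}$.

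The main technical obstacle is to legitimise the separation-of-variables step, since $v^{k_1}$ fails to be locally Lipschitz at $0$ and $v^{k_2}$ is merely locally Lipschitz. The standard remedy is a comparison lemma for scalar ODEs: one compares $v$ with the explicit solutions of $\dot w=-\beta w^{k_2}$ on $[0,T_1]$ and $\dot w=-\alpha w^{k_1}$ on $[T_1,T]$, both of which integrate in closed form, and uses the pointwise inequality $v(t)\le w(t)$ whenever $v(0)\le w(0)$. A further delicate point is that $v$ may reach $0$ strictly, so the second comparison must allow solutions to stop in finite time; this is where $k_1<1$ is essential. Once the comparison is justified, combining it with the equivalence $V(x)=0\Leftrightarrow x=x^*$ from (a) shows $x(t;x_0)=x^*$ for all $t\ge T_{\max}$, i.e.\ the equilibrium is globally fixed-time stable with the stated settling-time bound.
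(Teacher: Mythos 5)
The paper does not prove this lemma: it is quoted from \cite{polyakov2011} and used as a black box, so there is no in-paper argument to compare against. Your proposal reconstructs the standard proof of Polyakov's fixed-time criterion, and its structure is the right one: split at the level set $\{V=1\}$, use the $\beta V^{k_2}$ term (with $k_2>1$) to get a bound on the time to reach $V\le 1$ that is uniform in the initial condition, then use the $\alpha V^{k_1}$ term (with $k_1<1$) to drive $V$ to zero in finite time, and add the two bounds. Your attention to the technical points --- the comparison lemma for the scalar differential inequality, the failure of local Lipschitz continuity of $v\mapsto v^{k_1}$ at the origin, and the need for $\{V=0\}$ to be forward invariant so that $x(t;x_0)=x^*$ for all $t\ge T_{\max}$ --- is exactly what a careful write-up requires.

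One slip should be corrected: the displayed inequality for the first phase has the wrong sign in the numerator. Integrating $\dot v\le-\beta v^{k_2}$ from $0$ to the hitting time $T_1$ of $\{v=1\}$ gives
\begin{equation*}
\beta T_1\le\frac{1-v(0)^{1-k_2}}{k_2-1},
\end{equation*}
whereas your display has $\frac{v(0)^{1-k_2}-1}{k_2-1}$, which is nonpositive when $v(0)\ge1$ and would force $T_1=0$. With the corrected expression the uniform bound $T_1\le\frac{1}{\beta(k_2-1)}$ still follows immediately, since $v(0)^{1-k_2}\in(0,1]$ for $v(0)\ge1$, so the final settling-time estimate is unaffected; but the accompanying monotonicity remark (``large initial values make the right-hand side smaller'') is tied to the sign-flipped formula and should be adjusted --- in the corrected formula the bound \emph{increases} toward its supremum $\frac{1}{\beta(k_2-1)}$ as $v(0)\to\infty$. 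A minor further point: you invoke Lemma~\ref{thm:als} for asymptotic stability, but that lemma assumes $V$ continuously differentiable while the present hypothesis only assumes $V$ continuous with the decay inequality holding along trajectories; the finite-time argument already yields convergence to $x^*$, so that appeal can be dropped.
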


Before ending this section, we will give some properties of the AVEs~\eqref{eq:ave}.

\begin{lem}(\cite{chen2021})\label{thm:contrac}
If $\sigma_{\min}(A)> 1$ and $x^*$ is the solution of the AVEs \eqref{eq:ave}, then
\begin{equation}\label{ie:contr}
(x-x^*)^\top A^\top r(x) \ge\frac{1}{2} \left\|r(x)\right\|^2,\; \forall \,x\in \mathbb{R}^n.
\end{equation}
\end{lem}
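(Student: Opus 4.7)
The plan is to exploit the fact that $x^{*}$ is a zero of the residual, rewrite $r(x)$ in a form that makes $x-x^{*}$ explicit, and then compare a couple of Euclidean norms using the spectral-norm hypothesis $\sigma_{\min}(A)>1$.

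First, because $x^{*}$ solves the AVEs, $Ax^{*}-|x^{*}|-b=0$, and so I can subtract this identity from the definition of $r(x)=Ax-|x|-b$ to get the clean decomposition
\[
r(x)=A(x-x^{*})-(|x|-|x^{*}|).
\]
Set $y=x-x^{*}$ and $w=|x|-|x^{*}|$, so that $r(x)=Ay-w$.

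Next, I would expand both sides of the target inequality in the variables $(Ay,w)$. A direct calculation gives
\[
(x-x^{*})^{\top}A^{\top}r(x)=y^{\top}A^{\top}(Ay-w)=\|Ay\|^{2}-y^{\top}A^{\top}w,
\]
while
\[
\tfrac{1}{2}\|r(x)\|^{2}=\tfrac{1}{2}\|Ay\|^{2}-y^{\top}A^{\top}w+\tfrac{1}{2}\|w\|^{2}.
\]
Subtracting, the cross terms $y^{\top}A^{\top}w$ cancel and the claim \eqref{ie:contr} reduces to the purely geometric inequality
\[
\|Ay\|^{2}\ge\|w\|^{2}.
\]

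The final step, which I expect to be the easiest but is the one that actually uses the hypothesis, is to chain two standard bounds. From the singular-value inequality $\|Ay\|\ge\sigma_{\min}(A)\|y\|$ together with $\sigma_{\min}(A)>1$, I get $\|Ay\|\ge\|y\|$; from the component-wise reverse triangle inequality $\bigl||x_{i}|-|x^{*}_{i}|\bigr|\le|x_{i}-x^{*}_{i}|$ I get $\|w\|=\||x|-|x^{*}|\|\le\|x-x^{*}\|=\|y\|$. Combining these two bounds yields $\|Ay\|\ge\|w\|$, which finishes the proof.

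The main (and only mild) obstacle is noticing the right way to split $r(x)$ using the solution $x^{*}$; once $r(x)=A(x-x^{*})-(|x|-|x^{*}|)$ is in hand, the rest is a one-line algebraic identity plus two textbook inequalities, and no additional structural assumption on $A$ beyond $\sigma_{\min}(A)>1$ is needed.
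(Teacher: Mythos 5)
Your proof is correct. The paper does not actually prove this lemma---it is quoted from \cite{chen2021} without proof---but your argument (subtracting $Ax^*-|x^*|-b=0$ to obtain $r(x)=A(x-x^*)-(|x|-|x^*|)$, expanding both sides of \eqref{ie:contr} so that the cross terms cancel and the claim reduces to $\|A(x-x^*)\|\ge\| |x|-|x^*| \|$, and then using $\|A(x-x^*)\|\ge\sigma_{\min}(A)\|x-x^*\|\ge\|x-x^*\|\ge\| |x|-|x^*| \|$) is the standard one and is essentially the proof given in the cited reference.
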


\begin{lem}(\cite{chen2021})\label{lem:errbound}
Assume $\sigma_{\min}(A)>1$, then the AVEs~\eqref{eq:ave} has a unique solution, say $x^*$, and
\begin{equation}\label{ie:lowerbound}
\frac{1}{L_1 + L_2}\|r(x)\|\le \|x-x^*\|\le \frac{L_1 + L_2}{\mu} \|r(x)\|, \quad \forall x\in \mathbb{R}^n,
\end{equation}
where $L_1=\|A+I\|$ and $L_2=\|A-I\|$ are Lipschitz constants of the functions $Q$ and $F$ defined as in \eqref{eq:glcp}, respectively, and $0 < \mu = \sigma_{\min}(A)^2 - 1$.
\end{lem}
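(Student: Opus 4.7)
The plan is to base both inequalities on the single identity
$r(x) - r(x^*) = A(x-x^*) - (|x| - |x^*|)$,
which follows at once from $r(x^*)=0$. Uniqueness of $x^*$ under the assumption $\sigma_{\min}(A)>1$ (equivalently $\|A^{-1}\|<1$) is classical \cite{mame2006} and I would simply invoke it at the outset.

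For the lower inequality $\|r(x)\| \le (L_1+L_2)\|x-x^*\|$, I would apply the triangle inequality to the above identity together with the elementary bound $\||x|-|x^*|\|\le \|x-x^*\|$, obtaining $\|r(x)\| \le (\|A\|+1)\|x-x^*\|$. To upgrade the constant to the one in the statement, I would exploit the two decompositions $2A=(A+I)+(A-I)$ and $2I=(A+I)-(A-I)$, which yield $\|A\|\le \tfrac{1}{2}(L_1+L_2)$ and $1\le \tfrac{1}{2}(L_1+L_2)$ respectively; adding them gives $\|A\|+1\le L_1+L_2$, and hence the left-hand inequality in \eqref{ie:lowerbound}.

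For the upper (error-bound) inequality I would rewrite the same identity as $A(x-x^*) = r(x) + (|x|-|x^*|)$, take norms, and combine $\|A(x-x^*)\|\ge \sigma_{\min}(A)\|x-x^*\|$ with $\||x|-|x^*|\|\le\|x-x^*\|$ to reach $(\sigma_{\min}(A)-1)\|x-x^*\| \le \|r(x)\|$. Multiplying both sides by $\sigma_{\min}(A)+1>0$ turns the left-hand side into $\mu\|x-x^*\|$, and the same $A\pm I$ manipulation as in the previous step gives $\sigma_{\min}(A)+1\le \|A\|+1\le L_1+L_2$, which delivers the right-hand inequality in \eqref{ie:lowerbound}. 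The only mildly non-routine step is recognizing that one must scale the chain of inequalities by $\sigma_{\min}(A)+1$: without that factor one only obtains the sharper-looking $1/(\sigma_{\min}(A)-1)$, which does not expose the particular form $(L_1+L_2)/\mu$ that the fixed-time convergence argument in Section \ref{sec:main} will need later. No deeper tool, in particular not Lemma \ref{thm:contrac}, is required.
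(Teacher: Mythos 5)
Your argument is correct, but note that the paper itself gives no proof of this lemma: it is imported verbatim from \cite{chen2021}, where the constants arise from the GLCP framework. There the key observation is the componentwise identity $r(x)=\min\{Q(x),F(x)\}=\tfrac{1}{2}\bigl(Q(x)+F(x)-|Q(x)-F(x)|\bigr)$, the left inequality follows from the Lipschitz continuity of $Q$ and $F$ (hence the appearance of $L_1=\|A+I\|$ and $L_2=\|A-I\|$ in the constant), and the right inequality is the standard global error bound for a strongly monotone GLCP, with $\mu=\sigma_{\min}(A)^2-1$ entering as the modulus in $\langle Q(x)-Q(y),F(x)-F(y)\rangle\ge\mu\|x-y\|^2$. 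Your route bypasses all of this: you work directly with $r(x)-r(x^*)=A(x-x^*)-(|x|-|x^*|)$, get $\|r(x)\|\le(\|A\|+1)\|x-x^*\|$ and $(\sigma_{\min}(A)-1)\|x-x^*\|\le\|r(x)\|$ from the triangle inequality and $\bigl\||x|-|x^*|\bigr\|\le\|x-x^*\|$, and then massage the constants into the stated form via $2\|A\|\le L_1+L_2$, $2\le L_1+L_2$ and $\sigma_{\min}(A)+1\le\|A\|+1$. Every step checks out, and your intermediate bound $\|x-x^*\|\le\|r(x)\|/(\sigma_{\min}(A)-1)$ is in fact sharper than the stated one; you correctly recognize that you must deliberately weaken it by the factor $(\sigma_{\min}(A)+1)/(L_1+L_2)\le 1$ to recover the exact constants $(L_1+L_2)/\mu$ used downstream in Theorem \ref{thm:convergence}. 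The trade-off is that your proof is elementary and self-contained but specific to this AVEs residual, whereas the cited argument situates the bound in the general error-bound theory for monotone complementarity problems, which is what makes the same constants reusable in Lemma \ref{thm:contrac}.
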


\section{The new dynamical system and its convergence analysis}\label{sec:main}
In this section, inspired by the works of \cite{chen2021,ju2022}, we present a fixed-time inverse-free dynamical system for solving the AVEs~\eqref{eq:ave}.

The developed fixed-time inverse-free dynamical system is as follows:
\begin{equation}\label{eq:nds}
\begin{array}{l}
\frac{dx}{dt}=\rho(x) g(\gamma,x),\\
\rho(x)= \begin{cases}
		\frac{\rho_{1}}{\Vert r(x)\Vert^{1-\lambda_{1}}}+\frac{\rho_{2}}{\Vert r(x)\Vert^{1-\lambda_{2}}}, &\text{if}\quad x \notin \fix(r),\\
		0,&\text{if}\quad x\in \fix(r),
	\end{cases}
\end{array}
\end{equation}
where $\rho_{1},\,\rho_{2},\gamma>0$, $\lambda_1\in (0,1)$, $\lambda_2\in (1,+\infty)$ and $\fix(r) = \{x\in \mathbb{R}^n: r(x) = 0\}$.

\begin{thm}\label{thm:equi} Let $A$ be a nonsingular matrix, then $x^{*}$ is an equilibrium of \eqref{eq:nds} if and only if it solves the AVEs~\eqref{eq:ave}.
\end{thm}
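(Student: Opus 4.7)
The plan is to prove both implications directly by analyzing when the product $\rho(x)g(\gamma,x)$ on the right-hand side of \eqref{eq:nds} vanishes, exploiting the nonsingularity of $A$ and the piecewise structure of $\rho$.

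First I would handle the easy direction. If $x^*$ solves the AVEs~\eqref{eq:ave}, then by the definition of $r$ we have $r(x^*) = Ax^* - |x^*| - b = 0$, so $x^* \in \fix(r)$. The piecewise definition then immediately gives $\rho(x^*) = 0$, hence $\rho(x^*)g(\gamma,x^*) = 0$ and $x^*$ is an equilibrium of \eqref{eq:nds}.

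For the converse, suppose $x^*$ is an equilibrium, so $\rho(x^*)g(\gamma,x^*) = 0$, and split into two cases according to whether $x^* \in \fix(r)$. If $x^* \in \fix(r)$, then $r(x^*) = 0$ means $x^*$ solves the AVEs directly. Otherwise $\|r(x^*)\| > 0$, and since $\rho_1,\rho_2 > 0$ and $\lambda_1,\lambda_2 \in \mathbb{R}$ are fixed, both terms $\rho_1/\|r(x^*)\|^{1-\lambda_1}$ and $\rho_2/\|r(x^*)\|^{1-\lambda_2}$ are well-defined and strictly positive, so $\rho(x^*) > 0$. The equilibrium condition then forces $g(\gamma,x^*) = -\gamma A^\top r(x^*) = 0$. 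Since $\gamma > 0$ and the nonsingularity of $A$ implies $A^\top$ is injective, this yields $r(x^*) = 0$, contradicting $x^* \notin \fix(r)$. Hence this case cannot occur, and the converse is established.

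I do not expect any substantial obstacle: the statement is essentially a bookkeeping check that the piecewise scaling factor $\rho$ does not introduce spurious equilibria, since a product vanishes only when one of its factors does, and the nonsingularity of $A$ guarantees the equivalence $A^\top r(x) = 0 \Leftrightarrow r(x) = 0$. The only place warranting care is verifying that both summands defining $\rho(x)$ for $x \notin \fix(r)$ remain strictly positive for the admissible exponent ranges $\lambda_1 \in (0,1)$ and $\lambda_2 \in (1,+\infty)$, which is immediate.
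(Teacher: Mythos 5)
Your proof is correct and follows essentially the same route as the paper's: the forward direction is the trivial observation that $r(x^*)=0$ forces the right-hand side of \eqref{eq:nds} to vanish, and the converse uses the nonsingularity of $A$ (hence injectivity of $A^\top$) to conclude that the product $\rho(x^*)A^\top r(x^*)=0$ can only hold when $r(x^*)=0$. Your version is slightly more explicit about checking that $\rho(x^*)>0$ off $\fix(r)$, but the argument is the same.
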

\begin{proof} If $x^{*}$ is an equilibrium point of \eqref{eq:nds}, then
\begin{equation*}
\rho(x^{*})A^{T}r(x^*)=0.
\end{equation*}
Since $A$ is invertible, the above equation implies that
\begin{equation*}
\rho(x^{*})=0 \quad \text{or} \quad \Vert r(x^{*})\Vert=0,
\end{equation*}
from which we have
\begin{equation*}
x^*\in \fix(r)\quad \text{or}\quad  r(x^{*}) = 0,
\end{equation*}
both of which mean that $x^*$ is a solution of the AVEs~\eqref{eq:ave}.

The other direction is trivial.
\end{proof}

\begin{lem}(\cite{chen2021})\label{lem:lipcon}
For any given $\gamma>0$, the function $g$ defined as in \eqref{eq:dymos} is Lipschitz continuous with respect to $x$ in $\mathbb{R}^n$  with Lipschitz constant $\gamma \|A^\top\|(\|A\|+1)$.
\end{lem}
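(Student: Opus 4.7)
The plan is a direct computation exploiting the definition $g(\gamma,x) = -\gamma A^\top r(x) = -\gamma A^\top (Ax - |x| - b)$ and the fact that the componentwise absolute value map is $1$-Lipschitz in the $2$-norm. First I would fix arbitrary $x,y \in \mathbb{R}^n$ and write
\begin{equation*}
g(\gamma,x) - g(\gamma,y) = -\gamma A^\top\bigl[A(x-y) - (|x|-|y|)\bigr],
\end{equation*}
since the constant term $-b$ cancels.

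Next I would take $2$-norms and pull the operator norm of $A^\top$ out:
\begin{equation*}
\|g(\gamma,x) - g(\gamma,y)\| \le \gamma\,\|A^\top\|\cdot\bigl\|A(x-y) - (|x|-|y|)\bigr\|,
\end{equation*}
and then use the triangle inequality to split the bracketed term into $\|A\|\,\|x-y\| + \||x|-|y|\|$. The only nontrivial ingredient is the componentwise reverse triangle inequality $\bigl||x_i|-|y_i|\bigr| \le |x_i-y_i|$ for each $i$, which upon squaring and summing yields $\||x|-|y|\| \le \|x-y\|$. Substituting back gives
\begin{equation*}
\|g(\gamma,x) - g(\gamma,y)\| \le \gamma\,\|A^\top\|\bigl(\|A\|+1\bigr)\|x-y\|,
\end{equation*}
which is exactly the stated Lipschitz estimate.

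There is essentially no obstacle here; the result is a routine application of operator-norm submultiplicativity combined with the $1$-Lipschitz property of the absolute value. The only point that deserves care is avoiding a sloppy use of $\|A^\top A\| \le \|A\|^2$ where a tighter constant might be sought: the stated bound keeps $\|A^\top\|$ as a separate factor (note that $\|A^\top\| = \|A\|$ in the spectral norm, but the statement is written so as to make the role of the transpose explicit), so no refinement is needed.
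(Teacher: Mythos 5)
Your proof is correct and is the standard argument: the paper itself only cites this lemma from \cite{chen2021} without reproducing a proof, and the expected derivation is exactly your direct computation combining submultiplicativity of the spectral norm with the $1$-Lipschitz property $\||x|-|y|\|\le\|x-y\|$ obtained from the componentwise reverse triangle inequality. No gaps.
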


The following theorem follows from Lemma~\ref{lem:solution} and Lemma~\ref{lem:lipcon}.

\begin{thm}
For a given initial value $x(0)=x_0$, there exists a unique solution $x(t;x_0),t\in [0,\infty)$ for the dynamical system \eqref{eq:nds}.
\end{thm}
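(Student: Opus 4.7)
The plan is to invoke Lemma~\ref{lem:solution} to get local existence/uniqueness on an open set, and then combine this with a Lyapunov-type monotonicity argument (using Lemma~\ref{thm:contrac}) to handle both the equilibrium $x^{*}$ and global extension in time. The delicate point is that the right-hand side $F(x):=\rho(x)g(\gamma,x)$ is \emph{not} globally Lipschitz: near $\fix(r)$ the factor $\|r(x)\|^{\lambda_{1}-1}$ with $\lambda_{1}-1<0$ blows up, and the factor $\|r(x)\|^{\lambda_{2}-1}$ with $\lambda_{2}-1>0$ makes $F$ grow super-linearly at infinity. So direct application of the global-Lipschitz part of Lemma~\ref{lem:solution} fails and both endpoints have to be treated separately.

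First I would verify that $F$ is continuous on $\mathbb{R}^{n}$: away from $\fix(r)$ this is clear, while at any $x\in\fix(r)$ the estimate
\[
\|F(y)\|\le \gamma\|A^{\top}\|\bigl(\rho_{1}\|r(y)\|^{\lambda_{1}}+\rho_{2}\|r(y)\|^{\lambda_{2}}\bigr)
\]
shows $\|F(y)\|\to 0$ as $y\to x$. On $\mathbb{R}^{n}\setminus\fix(r)$, $\rho$ is a $C^{1}$ function of $x$ (since $\|r(x)\|$ is bounded away from $0$ on any compact subset) and $g$ is Lipschitz by Lemma~\ref{lem:lipcon}, so $F$ is locally Lipschitz there. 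By Theorem~\ref{thm:equi} together with the standing assumption $\sigma_{\min}(A)>1$ (from $\|A^{-1}\|<1$) and Lemma~\ref{lem:errbound}, $\fix(r)=\{x^{*}\}$ is a single point. Hence for any $x_{0}\neq x^{*}$, Lemma~\ref{lem:solution} yields a unique local solution on some interval $[0,\tau)$.

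For the case $x_{0}=x^{*}$, the constant trajectory $x(t)\equiv x^{*}$ is obviously a solution; uniqueness is the subtle point. Here I would use $V(x)=\tfrac{1}{2}\|x-x^{*}\|^{2}$ and apply Lemma~\ref{thm:contrac}:
\[
\dot V=-\rho(x)\gamma\,(x-x^{*})^{\top}A^{\top}r(x)\le -\tfrac{\gamma\rho(x)}{2}\|r(x)\|^{2}\le 0,
\]
valid along any absolutely continuous solution. Since $V(x(0))=0$ and $V\ge 0$, $V\equiv 0$, so $x(t)\equiv x^{*}$ is the only solution, ruling out the usual non-uniqueness that a sub-linear power would otherwise permit.

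Finally, to upgrade the local solution to a global one I would again exploit the monotonicity $\dot V\le 0$. For arbitrary $x_{0}$, the inequality $\|x(t)-x^{*}\|\le\|x_{0}-x^{*}\|$ confines the trajectory to a fixed compact ball, on which $F$ is continuous hence bounded. This uniform bound on $\|\dot x(t)\|$ prevents finite-time blow-up, so by standard continuation the solution extends to all of $[0,\infty)$. The main obstacle, as anticipated, is the non-Lipschitz behavior of $F$ at $x^{*}$; this is exactly what the Lyapunov calculation above is designed to overcome, and it is the reason the hypothesis $\sigma_{\min}(A)>1$ (entering through Lemma~\ref{thm:contrac}) is essential for the well-posedness claim.
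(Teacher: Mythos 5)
Your proof is correct, but it takes a genuinely different --- and substantially more careful --- route than the paper. The paper disposes of this theorem in one line, asserting that it ``follows from Lemma~\ref{lem:solution} and Lemma~\ref{lem:lipcon}''; but Lemma~\ref{lem:lipcon} only establishes global Lipschitz continuity of $g(\gamma,\cdot)$, i.e.\ of the right-hand side of the \emph{old} system \eqref{eq:dymos}, whereas the right-hand side of \eqref{eq:nds} is $\rho(x)g(\gamma,x)$. As you correctly observe, the factor $\rho(x)$ destroys both the global and the local Lipschitz property: the term $\|r(x)\|^{\lambda_1-1}$ with $\lambda_1<1$ makes the field merely H\"older at $\fix(r)=\{x^*\}$ (which is unavoidable --- a locally Lipschitz field could not reach its equilibrium in finite time), and the term $\|r(x)\|^{\lambda_2-1}$ with $\lambda_2>1$ gives superlinear growth at infinity. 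So the global-Lipschitz clause of Lemma~\ref{lem:solution} simply does not apply, and your three-part repair --- local Lipschitz continuity on $\mathbb{R}^n\setminus\fix(r)$ for local existence/uniqueness there, the Lyapunov inequality $\dot V\le-\tfrac{\gamma\rho(x)}{2}\|r(x)\|^2\le 0$ from Lemma~\ref{thm:contrac} to force forward uniqueness from $x^*$ (and after any finite hitting time of $x^*$), and confinement to the ball $\|x(t)-x^*\|\le\|x_0-x^*\|$ to rule out finite-time blow-up --- is exactly what is needed to make the statement true. One small slip: $\rho$ is not $C^1$ on $\mathbb{R}^n\setminus\fix(r)$, since $\|r(\cdot)\|$ inherits the nondifferentiability of $|x|$; it is only locally Lipschitz there, which is all your argument actually uses. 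In short, your proof supplies the argument the paper omits; the paper's citation of Lemma~\ref{lem:lipcon} buys nothing for \eqref{eq:nds} beyond the (needed) local Lipschitz continuity of the factor $g$ away from the equilibrium.
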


Now we are in the position to explore the convergence of the proposed model \eqref{eq:nds}.

\begin{thm}\label{thm:convergence}
Let $A$ be a nonsingular matrix and $\|A^{-1}\|<1$. Assume that $x^*$ is the unique solution of the AVEs~\eqref{eq:ave}. Then $x^*$ is the globally fixed-time stable equilibrium point of \eqref{eq:nds} with the settling-time given as
\begin{equation}\label{ie:st}
T(x_0)\leq T_{\max}=\frac{1}{c_{1}(1-\kappa_{1})}+\frac{1}{c_{2}(\kappa_{2}-1)},
\end{equation}
where $x_0$ is the initial condition, $c_{1}>0$, $c_{2}> 0$, $ \kappa_{1} \in(0.5,1)$ and $\kappa_{2}\in(1 +\infty)$ are some constants defined as in \eqref{eq:cc} and \eqref{eq:kk}.
\end{thm}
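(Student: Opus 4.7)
The plan is to apply Lemma~\ref{lem:ft} with the candidate Lyapunov function
\[
V(x)=\tfrac{1}{2}\|x-x^{*}\|^{2},
\]
which is continuous, radially unbounded, and satisfies $V(x)=0$ iff $x=x^{*}$. First I would observe that the hypothesis $\|A^{-1}\|<1$ is equivalent to $\sigma_{\min}(A)>1$, so Lemma~\ref{thm:contrac} and Lemma~\ref{lem:errbound} are both available; in particular $x^{*}$ is the unique solution of \eqref{eq:ave}, and by Theorem~\ref{thm:equi} the set $\fix(r)$ coincides with $\{x^{*}\}$. Hence $\rho(x)>0$ for every $x\neq x^{*}$, and the only equilibrium of \eqref{eq:nds} is $x^{*}$.

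Next I would differentiate $V$ along an arbitrary trajectory $x(t;x_{0})$ of \eqref{eq:nds}. For $x\neq x^{*}$,
\[
\frac{dV}{dt}=(x-x^{*})^{\top}\frac{dx}{dt}=-\gamma\,\rho(x)\,(x-x^{*})^{\top}A^{\top}r(x).
\]
Applying Lemma~\ref{thm:contrac} bounds this by $-\tfrac{\gamma}{2}\rho(x)\|r(x)\|^{2}$, and substituting the definition of $\rho(x)$ collapses the $\|r(x)\|^{2}$ factor into
\[
\frac{dV}{dt}\le -\tfrac{\gamma}{2}\bigl(\rho_{1}\|r(x)\|^{1+\lambda_{1}}+\rho_{2}\|r(x)\|^{1+\lambda_{2}}\bigr).
\]
This is the key cancellation that motivates the choice of exponents in $\rho(x)$.

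The remaining task is to convert powers of $\|r(x)\|$ into powers of $V$. Using the lower bound $\|r(x)\|\ge \tfrac{\mu}{L_{1}+L_{2}}\|x-x^{*}\|$ from Lemma~\ref{lem:errbound} and $\|x-x^{*}\|^{1+\lambda_{i}}=2^{(1+\lambda_{i})/2}V^{(1+\lambda_{i})/2}$, I would set
\[
\kappa_{1}=\frac{1+\lambda_{1}}{2}\in\bigl(\tfrac12,1\bigr),\qquad \kappa_{2}=\frac{1+\lambda_{2}}{2}>1,
\]
and
\begin{equation}\label{eq:cc}
c_{i}=\tfrac{\gamma\rho_{i}}{2}\Bigl(\tfrac{\mu}{L_{1}+L_{2}}\Bigr)^{1+\lambda_{i}}2^{(1+\lambda_{i})/2},\qquad i=1,2,
\end{equation}
so that
\begin{equation}\label{eq:kk}
\frac{dV}{dt}\le -c_{1}V^{\kappa_{1}}-c_{2}V^{\kappa_{2}},\qquad \forall x\neq x^{*}.
\end{equation}
Lemma~\ref{lem:ft} then yields global fixed-time stability of $x^{*}$ with the claimed bound
$T(x_{0})\le \tfrac{1}{c_{1}(1-\kappa_{1})}+\tfrac{1}{c_{2}(\kappa_{2}-1)}$.

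The main obstacle is really a bookkeeping issue rather than a conceptual one: one must match the exponents in $\rho(x)$ with those required by the fixed-time criterion, and, at the boundary, handle the piecewise definition of $\rho$ at $x=x^{*}$ cleanly (the derivative formula holds off the singleton $\{x^{*}\}$, and the trajectory reaches $x^{*}$ in finite time by \eqref{eq:kk} before the switch to $\rho\equiv 0$ becomes relevant). Once this is done, the stability conclusion and the explicit settling-time bound follow directly by invoking Lemma~\ref{lem:ft}.
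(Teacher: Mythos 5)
Your proposal is correct and follows essentially the same route as the paper: the Lyapunov function $V(x)=\tfrac12\|x-x^*\|^2$, Lemma~\ref{thm:contrac} to bound $\langle x-x^*, A^\top r(x)\rangle$ from below by $\tfrac12\|r(x)\|^2$, the error bound of Lemma~\ref{lem:errbound} to pass from $\|r(x)\|$ to $\|x-x^*\|$, and the fixed-time criterion of Lemma~\ref{lem:ft} to conclude. The only (harmless) difference is that you collapse the powers of $\|r(x)\|$ before invoking the error bound and hence use only the inequality $\|r(x)\|\ge\tfrac{\mu}{L_1+L_2}\|x-x^*\|$, which gives $c_1=\tfrac{2^{(\lambda_1-1)/2}\gamma\rho_1\mu^{1+\lambda_1}}{(L_1+L_2)^{1+\lambda_1}}$ in place of the paper's $\tfrac{2^{(\lambda_1-1)/2}\gamma\rho_1\mu^{2}}{(L_1+L_2)^{3-\lambda_1}}$ (the paper instead applies both sides of \eqref{ie:lowerbound} to the $\lambda_1$-term); since $\mu<(L_1+L_2)^2$, your constant is at least as large, so your settling-time bound is no worse.
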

\begin{proof}
It follows from Theorem~\ref{thm:equi} that $x^*$ is the equilibrium point of \eqref{eq:nds}. Consider the following Lyapunov function
\begin{equation*}
V(x)=\frac{1}{2} \Vert x-x^{*}\Vert^{2}.
\end{equation*}
Obviously, $V(x)\rightarrow \infty$ as $\Vert x-x^{*}\Vert\rightarrow \infty$ and $V(x) = 0$ if and only if $x=x^*$.

In the following, we consider $x_0\in \mathbb{R}^{n}\setminus \{x^{*}\}$. It follows from \eqref{ie:contr}, \eqref{ie:lowerbound} and \eqref{eq:nds} that
\begin{align*}
\frac{dV(x)}{dt} &= (x - x^*)^\top \frac{dx}{dt} = -\left\langle x(t) - x^*, \gamma \rho(x)A^\top r(x)\right\rangle\\
&= -\left\langle x - x^*, \frac{\gamma\rho_1 A^\top r(x)}{\|r(x)\|^{1-\lambda_1}} + \frac{\gamma\rho_2 A^\top r(x)}{\|r(x)\|^{1-\lambda_2}}\right\rangle\\
&= - \frac{\gamma\rho_1}{\|r(x)\|^{1-\lambda_1}}\left\langle x - x^*, A^\top r(x)\right\rangle
- \frac{\gamma\rho_2}{\|r(x)\|^{1-\lambda_2}}\left\langle x - x^*,  A^\top r(x)\right\rangle\\
&\le  - \frac{\gamma\rho_1}{2\|r(x)\|^{1-\lambda_1}}\|r(x)\|^2
- \frac{\gamma\rho_2}{2\|r(x)\|^{1-\lambda_2}}\|r(x)\|^2\\
&\le  - \frac{\gamma\rho_1}{2\|r(x)\|^{1-\lambda_1}}\frac{\mu^2}{(L_1+L_2)^2}\|x-x^*\|^2
- \frac{\gamma\rho_2}{2\|r(x)\|^{1-\lambda_2}}\frac{\mu^2}{(L_1+L_2)^2}\|x-x^*\|^2\\
&\le  -\frac{\gamma\rho_1\mu^2}{2(L_1+L_2)^{3-\lambda_1}}
\|x-x^*\|^{\lambda_1+1}
- \frac{\gamma\rho_2\mu^{1+\lambda_2}}{2(L_1+L_2)^{1+\lambda_2}}\|x-x^*\|^{\lambda_2+1}\\
&= -\frac{2^{\frac{\lambda_1-1}{2}}\gamma\rho_1\mu^2}{(L_1+L_2)^{3-\lambda_1}}
\left(\frac{1}{2}\|x-x^*\|^2\right)^{\frac{\lambda_1+1}{2}}
- \frac{2^{\frac{\lambda_2-1}{2}}\gamma\rho_2\mu^{1+\lambda_2}}{(L_1+L_2)^{1+\lambda_2}}\left(\frac{1}{2}\|x-x^*\|^2\right)^{\frac{\lambda_2+1}{2}}\\
&= -c_1 V(x)^{\kappa_1} - c_2 V(x)^{\kappa_2},
\end{align*}
where
\begin{equation}\label{eq:cc}
c_1 = \frac{2^{\frac{\lambda_1-1}{2}}\gamma\rho_1\mu^2}{(L_1+L_2)^{3-\lambda_1}}>0, \quad
c_2 = \frac{2^{\frac{\lambda_2-1}{2}}\gamma\rho_2\mu^{1+\lambda_2}}{(L_1+L_2)^{1+\lambda_2}}>0,
\end{equation}
and
\begin{equation}\label{eq:kk}
\kappa_1 = \frac{\lambda_1+1}{2}\in (0.5,1),\quad \kappa_2 = \frac{\lambda_2+1}{2}\in (1,+\infty).
\end{equation}
Then the results follow from Lemma~\ref{lem:ft}.
\end{proof}

\begin{rem}
It follows from \cite{zuo2018} that the smaller settling-time often implies the faster convergent rate of a given fixed-time dynamical system. Hence, it concludes from \eqref{ie:st}, \eqref{eq:cc} and \eqref{eq:kk} that the larger $\gamma$, $\rho_1$ or $\rho_2$ is, the smaller the upper bound $T_{\max}$ of the settling-time or the faster convergence rate of the dynamical system \eqref{eq:nds} is. In addition, $T_{\max}$ is also dependent on the parameters $\lambda_1$ and $\lambda_2$. See the next section for more details.
\end{rem}

\section{Numerical simulations}\label{sec:numerical}
In this section, we will present one example to illustrate the effectiveness of the proposed method. All experiments are implemented in MATLAB R2018b with a machine precision $2.22\times 10^{-16}$ on a PC Windows 10 operating system with an Intel i7-9700 CPU and 8GB RAM. The ODE solver used is ``ode23". Concretely, the MATLAB expression
$$
[t,y] = \text{ode23}(odefun,tspan,y_0)
$$
is used, which integrates the system of differential equations from $t_0$ to $t_f$ with $tspan = [t_0,t_f]$. Here, ``odefun'' is a function handle.

\begin{exam}[\!\!\cite{guwl2019}]\label{ex:sorvsdrs} Consider the AVEs \eqref{eq:ave} with
$$ A = \textbf{tridiag}(-1,8,-1)\in \mathbb{R}^{n\times n}\quad\text{and}\quad b=Ax^*-|x^*|,$$
where $x^*=(-1,1,-1,1,\cdots,-1,1)^\top\in \mathbb{R}^n.$

In this example, we have $\|A^{-1}\|<1$ and thus the AVEs \eqref{eq:ave} has a unique solution for any $b\in\mathbb{R}^n$. Equivalently, the dynamical model \eqref{eq:nds} has a unique equilibrium point and it is globally fixed-time stable.

Since we are interested in the fixed-time stable dynamical systems, in the first stage, we only compare our method (denoted by `LYYHC') with the method (denoted by `JLHH') proposed in \cite{ju2022}. For both methods, we set $\rho_1=\rho_2=100$, $\lambda_1=0.5$, $\lambda_2 = 1.5$. For LYYHC, $\gamma = 6$ and $x^*=(0,0,\cdots,0)^\top\in \mathbb{R}^n.$ For JLHH, $\theta = 0.5*\lambda_{\min}(M)$, $\eta$ is selected such that $\eta L^2<2\theta$ and $y_0 = (M-I)x_0-b$. When $n=20$, we have $\eta = 1.2,\theta= 1.2228,T_{\max}^{JLHH} = 0.8015, T_{\max}^{LYYHC} = 0.7355$ and numerical simulations are shown in Figure~\ref{fig:gj} and Figure~\ref{fig:err}. When $n=2000$, we have $\eta = 1.2,\theta= 1.2222,T_{\max}^{JLHH} = 0.8625, T_{\max}^{LYYHC} = 0.7467$ and numerical simulations are shown in Figure~\ref{fig:err}. Under the setting of these parameters, numerical results show that LYYHC is superior to JLHH.

In the second stage, we show the influence of the tunable parameters for LYYHC. Firstly, we fix $\gamma =6, \lambda_1 = 0.5, \lambda_2 = 1.5, n=20$. Secondly, we freeze $\gamma=1,\rho_1=\rho_2=2,\lambda_1 = 0.5, n=10$. Thirdly, we fix $\gamma=5,\rho_1=\rho_2=5,\lambda_2 = 1.01, n=10$. Finally, we set $\rho_1 =\rho_2=100, \lambda_1 = 0.5, \lambda_2 = 1.5, n=20$ and alter $\gamma$. Numerical results are shown in Figure~\ref{fig:vp} and Table~\ref{table:QBD3}, for this example, from which we can observe that the settling-time of LYYHC is decreasing with increased value of parameters $\gamma, \rho_1,\rho_2$. However, the influence of the parameters $\lambda_1$ and $\lambda_2$ on $T_{\max}$ or the convergence rate of \eqref{eq:nds} is more complicated.

\begin{figure}[t]
{\centering
\begin{tabular}{ccc}
\hspace{-0.3 cm}
\resizebox*{0.50\textwidth}{0.26\textheight}{\includegraphics{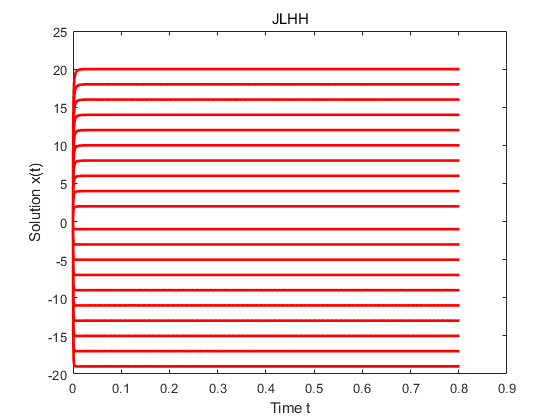}}
& & \hspace{-0.9 cm}
\resizebox*{0.50\textwidth}{0.26\textheight}{\includegraphics{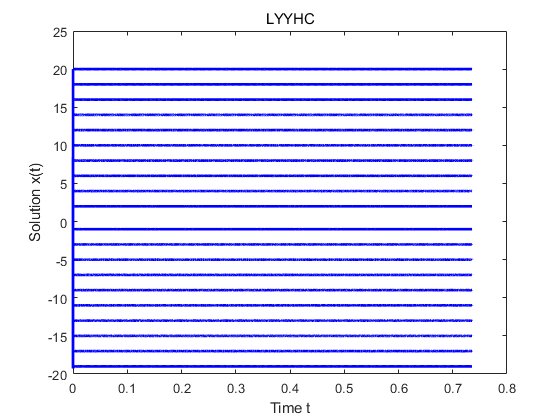}} \vspace{2ex}
\end{tabular}\par
}\vspace{-0.15 cm}
\caption{Phase diagrams for Example~\ref{ex:sorvsdrs} with $n=20$.}
\label{fig:gj}
\end{figure}

\begin{figure}[t]
{\centering
\begin{tabular}{ccc}
\hspace{-0.3 cm}
\resizebox*{0.50\textwidth}{0.26\textheight}{\includegraphics{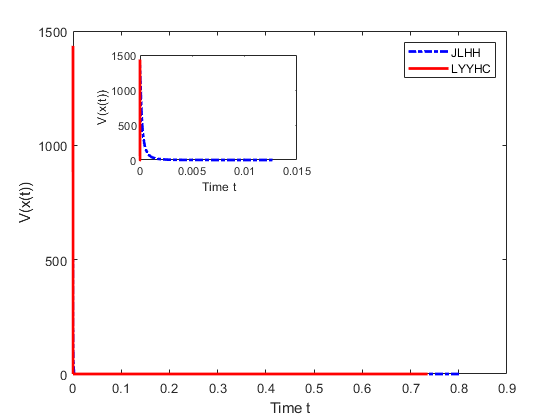}}
& & \hspace{-0.9 cm}
\resizebox*{0.50\textwidth}{0.26\textheight}{\includegraphics{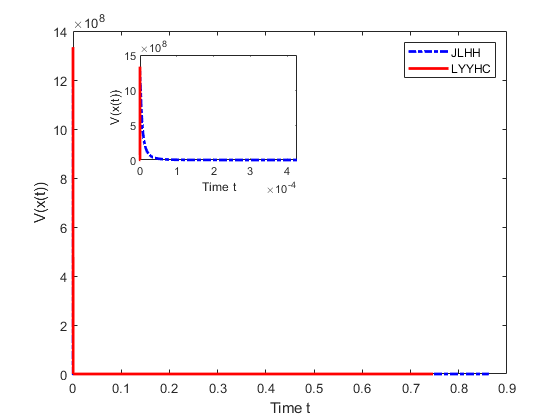}} \vspace{2ex}
\end{tabular}\par
}\vspace{-0.15 cm}
\caption{The energy $V(x(t))$ for Example~\ref{ex:sorvsdrs} with $n=20$ (left) and $n=2000$ (right). The energy of the first $30$ points in the trajectory is shown in the inner figure, from which we clearly see that LYYHC converges faster than LJHH.}
\label{fig:err}
\end{figure}

\begin{figure}[t]
{\centering
\begin{tabular}{ccc}
\hspace{-0.3 cm}
\resizebox*{0.50\textwidth}{0.26\textheight}{\includegraphics{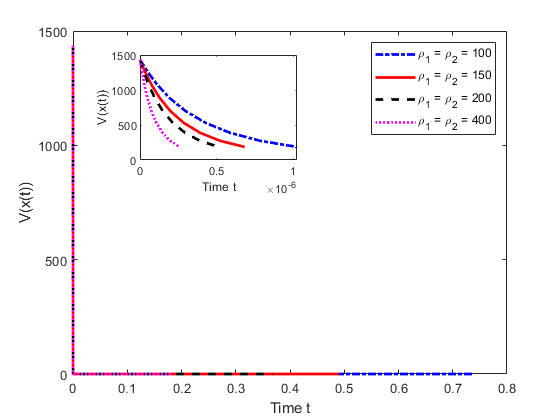}}
& & \hspace{-0.9 cm}
\resizebox*{0.50\textwidth}{0.26\textheight}{\includegraphics{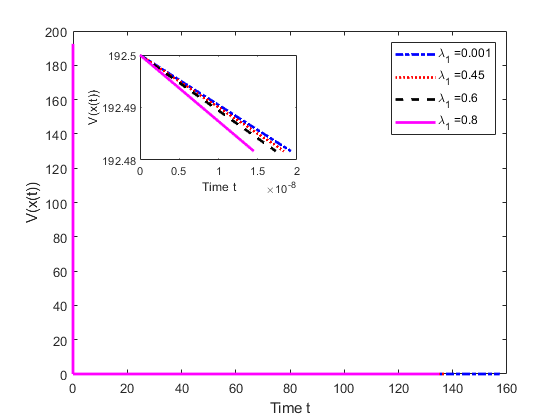}} \vspace{2ex}\\
\hspace{-0.3 cm}
\resizebox*{0.50\textwidth}{0.26\textheight}{\includegraphics{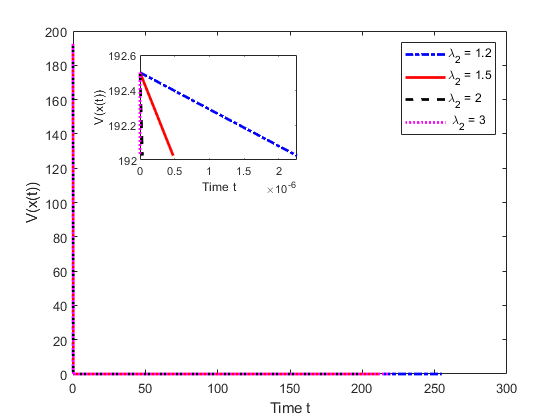}}
& & \hspace{-0.9 cm}
\resizebox*{0.50\textwidth}{0.26\textheight}{\includegraphics{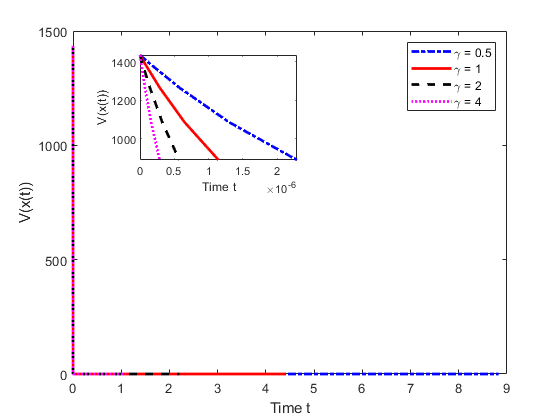}} \vspace{2ex}
\end{tabular}\par
}\vspace{-0.15 cm}
\caption{The energy $V(x(t))$ for Example~\ref{ex:sorvsdrs} with different parameters.}
\label{fig:vp}
\end{figure}

\setlength{\tabcolsep}{6.0pt}
\begin{table}[!h]\small
\centering
\caption{$T_{\max}$ for Example \ref{ex:sorvsdrs}.}\label{table:QBD3}
\begin{tabular}{|c|cccc|}\hline
$\lambda_1$  & $0.001$ & $0.45$ & $0.6$ & $0.8$\\
$T_{\max}$ & $157.3289$ & $137.0701$ & $134.7652$ & $135.0736$\\ \cline{1-5}
$\lambda_2$  & $1.2$ & $1.5$ & $2$ & $3$\\
$T_{\max}$ & $254.8676$ & $212.1412$ & $203.5690$ & $219.5776$\\ \cline{1-5}
$\gamma$  & $0.5$ & $1$ & $2$ & $4$\\
$T_{\max}$ & $8.8264$ & $4.4132$ & $2.2066$ & $1.1033$\\ \cline{1-5}
$\rho_1 (\text{or}~\rho_2)$  & $100$ & $150$ & $200$ & $400$\\
$T_{\max}$ & $0.7355$ & $0.4904$ & $0.3678$ & $0.1839$\\ \cline{1-5}
\end{tabular}
\end{table}

\end{exam}

\section{Conclusion}\label{sec:conclusion}
In this paper, a fixed-time dynamical system is proposed to solve the AVEs~\eqref{eq:ave}. Theoretical results show that the presented model is globally convergent and has a conservative settling-time. Numerical comparison with the method proposed in \cite{ju2022} shows that our method is preferred, at least in the sense that our method is inverse-free.

\end{document}